\documentclass{article}

\usepackage[12pt]{extsizes}
\usepackage{cmap}
\usepackage[utf8]{inputenc}
\usepackage[T2A]{fontenc}
\usepackage[english]{babel}
\usepackage{amsmath}
\usepackage{amsthm}
\usepackage{amssymb}
\usepackage{amsfonts}
\usepackage{euscript}
\usepackage{enumerate}
\usepackage{hyperref}
\usepackage{tikz-cd}

\usepackage[all]{xy}
\usepackage{epigraph}

\newcommand{\R}{\mathbb{R}}
\renewcommand{\C}{\mathbb{C}}

\newcommand{\Z}{\mathbb{Z}}

\newcommand{\gr}{\mathop{\mathrm{gr}}\nolimits}

\newcommand{\SL}{\mathop{\mathrm{SL}}\nolimits}

\newcommand{\GL}{\mathop{\mathrm{GL}}\nolimits}

\newcommand{\Spec}{\operatorname{Spec}}

\newcommand{\mf}{\mathfrak}

\newcommand{\mc}[1]{\mathcal{#1}}

\renewcommand{\Re}{\operatorname{Re}}

\newcommand{\End}{\operatorname{End}}

\newcommand{\Res}{\operatorname{Res}}

\newcommand{\abs}[1]{\lvert #1\rvert}

\newcommand{\boldt}{\bold t}

\newcommand{\rmi}{\mathrm{i}}

\newcommand{\Supp}{\operatorname{Supp}}
\newcommand{\Gr}{\operatorname{Gr}}

\DeclareMathOperator{\sph}{sph}
\DeclareMathOperator{\T}{T}
\begin{document}
\newtheorem{thr}{Theorem}[section]
\newtheorem*{thr*}{Theorem}
\newtheorem{lem}[thr]{Lemma}
\newtheorem*{lem*}{Lemma}
\newtheorem{cor}[thr]{Corollary}
\newtheorem{prop}[thr]{Proposition}
\newtheorem*{prop*}{Proposition}
\newtheorem{stat}[thr]{Statement}
\newtheorem*{stat*}{Statement}
\newtheorem{example}[thr]{Example}

\theoremstyle{definition}
\newtheorem{defn}[thr]{Definition}
\newtheorem*{defn*}{Definition}
\theoremstyle{remark}
\newtheorem{rem}[thr]{Remark}
\newtheorem*{rem*}{Remark}
\numberwithin{equation}{section}
\title{Residue construction of quantized Coulomb branches}
\author{Daniil Klyuev}

\maketitle
\begin{abstract}
We describe the image of a quantized BFN Coulomb branch $\mc{A}_{G,N}^{\hbar=1}$ under localization (abelianization) map for any $G,N$. In most cases, such as quiver gauge theories without loops, this description works for any flavors, sometimes we need to take generic or formal flavor parameters. The answer is given by a roots and residue condition similar to Ginzburg---Kapranov---Vasserot construction of DAHA~\cite{GKV}. As a corollary, for any quantized conical Coulomb branch (and zero or small flavors) we provide a mathematical construction of the sphere trace introduced by Gaiotto and Okazaki~\cite{GO}.
\end{abstract}
\section{Introduction}

Let $G$ be a reductive group and $N$ its representation. Braverman, Finkelberg and Nakajima gave a mathematical definition of Coulomb branch $\mc{M}_{G,N}=\Spec \mc{A}_{G,N}^{\hbar=0}$ in~\cite{BFN}. The algebra $\mc{A}_{G,N}^{\hbar=0}$ is defined as $G(O)$-equivariant Borel-Moore homology of an infinite-dimensional scheme. Adding equivariance with respect to the loop torus $\C^{\times}_{\hbar}$ we get an algebra $\mc{A}_{G,N}$ that is a deformation of $\mc{A}_{G,N}^{\hbar=0}$ over $\C[\hbar]$. Below we will set $\hbar=1$. 

In the case of quiver gauge theories for quivers without loops quantized Coulomb branches are isomorphic to a quotient of a shifted Yangian~\cite{BFNQuivers, KTWWY} (hence they are called truncated shifted Yangians.) The main tool in the proof of this description is localization map $(i_*)^{-1}\colon \mc{A}_{G,N}\hookrightarrow \mc{A}_{T,N,loc}$ described in Remark~5.23 of~\cite{BFN}.  Here "$loc$" means that we consider Ore localization at $\alpha+k\in\C[\mf{t}]$ for roots $\alpha$ of $G$ and integers $k$.

We can further embed $\mc{A}_{T,N,loc}$ into $\mc{A}_{T,0,loc}=\oplus_{\lambda\in Y} \C[\mf{t}]_{loc}r^{\lambda}$. Here $Y$ is the cocharacter lattice of $T$. In this paper we describe the image of $\mc{A}_{G,N}$ in $\mc{A}_{T,0,loc}$ for any pair $(G,N)$ obtaining a roots and residues description similar to Ginzburg---Kapranov---Vasserot residue construction of DAHA~\cite{GKV}.

 An answer for a very close object in the case of $G=\SL(2)$ and $N=0$ was given in~\cite{KV}. Schrader and Shapiro proved the residue description for pure $K$-theoretic $\GL_n$ Coulomb branch, see Lemma~6.14 in~\cite{SS}. Crisan obtained roots and residue description of Iwahori Coulomb branches in~\cite{Dragos}.

 Let $\mc{A}_{G,N}$ be quantized Coulomb branch with generic or formal flavor parameters.
 \begin{rem}
 This is a technical condition, the only thing we actually need is that there are no cancellations in~\eqref{EqFractionDoubled}. This is true for any flavors if no weight of $N$ is proportional to a root of $G$ and this is true for zero flavors unless $2\xi_i$ is a root for some weight $\xi_i$ of $N$.

\end{rem}  Our main result is the following:
\begin{thr*}[Theorem~\ref{ThrAnswer}]
 Let $S$ be a subset of $W$-invariant elements $a=\sum a_{\lambda} r_{\lambda}$ of $\mc{A}_{T,0,loc}$ that satisfy the following condition: 
\begin{enumerate}
\item
Each $a_{\lambda}$ is divisible by 
\begin{equation}
\label{EqFractionDoubled}
\frac{\prod_{i=1}^n \prod_{j=0}^{-\xi_i(\lambda)-1}(\xi_i+b_i+(\xi_i(\lambda)+j+\tfrac12))}{\prod_{\alpha>0}\prod_{\abs{k}<N}(\alpha-k)}
\end{equation} 
for some $N>0$. In particular, $a_{\lambda}$ has at most simple poles at $\alpha-k$, where $\alpha$ is a positive root of $G$ and $k$ is an integer. 

Denote the residue at this divisor by $\Res_{\alpha,k}(a_{\lambda})\in \C[\alpha^{\perp}]$.

\item
The residues satisfy the condition $\Res_{\alpha,k}(a_{\lambda})+\Res_{\alpha,k}(a_{s_{\alpha}(\lambda)+k\check{\alpha}})=0$.
\end{enumerate}

Then the image of $\mc{A}_{G,N}$ in $\mc{A}_{T,0,loc}$ is $S$.
\end{thr*}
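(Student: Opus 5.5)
Both inclusions, image $\subseteq S$ and $S\subseteq$ image, will be proved by reduction to rank one. The reduction uses the standard BFN technique: an element of $\mc{A}_{T,0,loc}$ lies in the image of $\mc{A}_{G,N}$ if and only if, for every root $\alpha$, it lies in the image of $\mc{A}_{L_\alpha,N}$ after localizing at all divisors $\beta-k$ with $\beta\ne\pm\alpha$. Here $L_\alpha$ is the Levi subgroup of semisimple rank one. This is the quantized analogue of the Hartogs-type argument in \cite{BFN} and \cite{BFNQuivers}, used to identify Coulomb branches with truncated shifted Yangians. The localization map is an isomorphism away from the root divisors. Moreover, $\mc{A}_{G,N}$ is a free module over $H^*_G(pt)$ and its associated graded is a normal, Cohen--Macaulay algebra. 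So an element is regular exactly when it is regular in codimension one, and the codimension-one loci are the hyperplanes $\alpha=k$. Near a generic point of such a hyperplane only $L_\alpha$ matters. The plan is therefore: (1) establish this reduction; (2) verify the theorem for $G$ of semisimple rank one; (3) check that the conditions defining $S$ are themselves local along each $\alpha=k$.

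For step (2), first treat the abelian direction. For $G=T$ the image of $\mc{A}_{T,N}$ in $\mc{A}_{T,0,loc}$ is spanned by $f\,r^\lambda$, where $f$ is divisible by the product over $i$ of the BFN factors $\prod_{j}(\xi_i+b_i+\xi_i(\lambda)+j+\tfrac12)$ for $\xi_i(\lambda)<0$. This follows from the explicit abelian formula of \cite{BFN}, Section~4, with the $\tfrac12$ shift coming from the chosen normalization. This accounts for the numerator of \eqref{EqFractionDoubled}.

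For rank one, the image of $\mc{A}_{G,N}$ is generated over $\C[\mf t]^W$ by minuscule-type monopole operators, i.e.\ pushforwards of fundamental classes of orbits $\Gr^\lambda$ (with the $N$-part). Their images are given by the Weyl-symmetrized formulas of BFN Remark~5.23: $\sum_{w}w\bigl(\frac{e_N(\lambda)}{e(T_\lambda\Gr^\lambda)}r^\lambda\bigr)$, together with dressed versions. The root denominators $\prod(\alpha-k)$ have simple zeros. Pairing the terms $\lambda$ and $s_\alpha\lambda+k\check\alpha$, which are exchanged by the affine reflection fixing the hyperplane $\alpha=k$, shows that the residues cancel. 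This proves the inclusion image $\subseteq S$.

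For the reverse inclusion in rank one, I will filter $S$ by dominant $\lambda$ (the Bruhat order on $Y/W$). Given $a\in S$, take a maximal $\lambda$ with $a_\lambda\neq0$ and subtract a suitable $\C[\mf t]$-combination of dressed monopole operators for $\lambda$, so that the leading coefficient is killed. Two facts are needed here. First, the residue condition forces the leading polar part of $a_\lambda$ to match that of a symmetrized monopole operator. Second, the regular part is an arbitrary polynomial times the $N$-factor, which dressing realizes, using the freeness of $\mc{A}_{G,N}$ over $H^*_{G}$.

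The main obstacle is this matching in the reverse inclusion. One must show that divisibility by the numerator together with residue antisymmetry leaves no extra freedom. Equivalently, the residue $\Res_{\alpha,k}(a_\lambda)$, a function on $\alpha^\perp$, must be divisible by the appropriate $N$-factor restricted to $\alpha^\perp$. This is exactly where the genericity assumption of the Remark enters: there must be no cancellation between the numerator and the denominator of \eqref{EqFractionDoubled}, which can fail when some $\xi_i$ is proportional to $\alpha$. I would first attempt this by an explicit computation in $\SL_2$ and $\GL_2$ with $N$ a sum of characters, then reduce a general rank-one $L_\alpha$ to these cases via the central torus.
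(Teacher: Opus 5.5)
Your plan has genuine gaps in both directions. In each case the paper uses a short global argument, which also makes the rank-one reduction~(1) unnecessary; you only assert that reduction, and in the quantized setting it would need the Hartogs property of the free $H^*_G(pt)$-module plus a localization theorem on the infinite-dimensional $\mc{R}_{G,N}$ comparing it with $\mc{R}_{L_\alpha,N}$.

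\emph{Image $\subseteq S$.} Even granting~(1), your rank-one step checks the residue condition only on symmetrized monopole formulas $\sum_w w\bigl(e_N(\lambda)e(T_\lambda\Gr^\lambda)^{-1}r^\lambda\bigr)$. These hold only when $\Gr^\lambda$ is closed, i.e.\ for $\lambda$ minuscule. For $G=\SL_2$ there is no nonzero minuscule coweight. For general $N$ you give no argument that $\mc{A}_{L_\alpha,N}$ is generated over $H^*$ by dressed minuscule monopoles. Checking only generators would also require knowing that $S$ is closed under multiplication. The missing ingredient is the polynomial representation, which is how the paper argues:
\begin{enumerate}
\item by Lemma~\ref{LemLocalizationIsCompatibleWithAction}, the image of $\mc{A}_{G,0}$ preserves $H^*_G(pt)=\C[\mf{t}]^W$;
\item by Proposition~\ref{PropResidueConditionIsPreservingPolynomials}, an element of $\mc{A}_{T,0,loc}$ preserves $\C[\mf{t}]^W$ exactly when its coefficients have simple poles satisfying the residue condition;
\item the square~\eqref{EqDiagramOfALgebras} through $\mc{A}_{G,0}$ and $\mc{A}_{T,N,loc}$ then gives image $\subseteq S$ for all elements at once, with no generation statement.
\end{enumerate}

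\emph{$S\subseteq$ image.} Your filtration strategy is the paper's, but you leave the decisive step open and misdiagnose it. With no cancellations, divisibility of $\Res_{\alpha,k}(a_\lambda)$ by the restricted $N$-factor follows automatically from condition~1, so no $\SL_2$/$\GL_2$ computation is needed. What is missing is a support bound on the poles of the top coefficient, valid for any $G$. Let $\lambda$ be dominant and $b\in S$ supported on the weights of $V_\lambda$. If $b_\lambda$ has a pole at $\alpha-k$, the residue condition forces $b_{s_\alpha(\lambda)+k\check\alpha}\neq0$. So $\lambda-(\alpha(\lambda)-k)\check\alpha$ is a weight of $V_\lambda$, giving $0\le k\le\alpha(\lambda)$. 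Moreover $k=\alpha(\lambda)$ is excluded, since then the partner is $\lambda$ itself and the residue must vanish. Hence $b_\lambda$ is a $W_\lambda$-invariant polynomial multiple of the numerator of~\eqref{EqFraction} divided by $\prod_{\alpha>0}\prod_{j=0}^{\alpha(\lambda)-1}(\alpha-j)$. That quotient is the top coefficient of the image of a lift of $1\in\C[\mf{t}]^{W_\lambda}$, computed at the smooth point $z^\lambda\in\Gr^\lambda$ and then pushed to $\mc{A}_{T,0,loc}$. Thus $\gr\mc{A}_{G,N}\to\gr S$ is onto, directly for general $G$.
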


One could use the residue construction to better understand twisted and positive traces on quantized Coulomb branches, see introduction to~\cite{KAb} or~\cite{KV} for a discussion of twisted and positive traces. The $\SL(2)$ residue construction was a crucial ingredient in the second part of~\cite{KV}. The only thing we discuss here is the following corollary of the residue construction.

Suppose that $G,N$ define a {\it conical} Coulomb branch (the theory is {\it good} or {\it ugly} in physics terminology. We discuss these terms in Section~\ref{SecSphere}.) We provide a mathematical proof that the special sphere partition function introduced by Gaiotto and Okazaki (see Section~2.6 and formula~(2.9) in~\cite{GO}) can be extended to the  {\it sphere trace} map $\T_{\sph}\colon \mc{A}_{G,N}\to \C.$ We also show that $\T_{\sph}$ indeed satisfies the twisted trace condition:
\begin{thr*}(Theorem~\ref{ThrSphereTraceCohomology})
Suppose $\mc{M}_{G,N}$ is conical and $\abs{\Re b_i}<\tfrac12$ for $i=1,\ldots,n$. Then there exists a $g_{\text{parity}}$-twisted trace $T_{\sph}\colon \mc{A}_{G,N}\to \C$ such that for $R(x)\in H^*_G(pt)\subset \mc{A}_{G,N}$ we have
\[T(R(x))=\int_{\mf{t}_{\R}}R(x)w_0(x)dx,\] where \[w_0(x)=\frac{\prod_{\alpha>0} \sinh(\pi\alpha)^2}{\prod_{j=1}^n \cosh(\pi(\xi_j-\rmi b_j))}.\]
\end{thr*}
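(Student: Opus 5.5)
The approach is to define $T_{\sph}$ on the localized abelian algebra using the residue description of Theorem~\ref{ThrAnswer}. Concretely, I would set
\[T_{\sph}\Bigl(\sum_\lambda a_\lambda r^\lambda\Bigr)=\int_{\mf{t}_{\R}} a_0(x)\,w_0(x)\,dx,\]
where $a_0$ is the coefficient of $r^0$. For $R(x)\in H^*_G(pt)$ we have $a_0=R$ and all other coefficients vanish, so the normalization in the statement holds by construction.

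First I would show that this integral converges and is well defined. By condition (1) of Theorem~\ref{ThrAnswer} with $\lambda=0$ (the numerator of~\eqref{EqFractionDoubled} is empty), $a_0$ has at most simple poles along $\alpha=k$. These poles are cancelled by the double zeros of $\sinh(\pi\alpha)^2$ at integer values of $\alpha$, so the integrand is regular on $\mf{t}_{\R}$. Convergence at infinity is where conicality enters. The weight $w_0$ decays like $\exp\bigl(2\pi\sum_{\alpha>0}\abs{\alpha}-\pi\sum_j\abs{\xi_j}\bigr)$, and the good/ugly condition (the monopole formula dimension being positive for all $\lambda\neq 0$) gives exactly that $\sum_j\abs{\xi_j(x)}-2\sum_{\alpha>0}\abs{\alpha(x)}>0$ on $\mf{t}_{\R}\setminus 0$. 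Hence the weight has exponential decay, while $a_0$ grows at most polynomially.

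Next comes the twisted trace property, $T(ab)=T(g_{\text{parity}}(b)a)$. Since $\mc{A}_{G,N}$ is generated over $H^*_G(pt)$ by elements with finitely many $r^\lambda$, and the $r^0$-coefficient of $ab$ is $\sum_\lambda a_\lambda\, \sigma_\lambda(b_{-\lambda})$ times the abelian structure factors, it suffices to prove, for each $\lambda$,
\[\int_{\mf{t}_{\R}} (a_\lambda r^\lambda b_{-\lambda}r^{-\lambda})_0\, w_0\,dx=\int_{\mf{t}_{\R}} (b_{-\lambda}r^{-\lambda}a_\lambda r^{\lambda})_0\, w_0\,dx\]
up to the parity twist. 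Conjugation by $r^\lambda$ is the shift $x\mapsto x+\lambda$ together with the $N$-factors $\prod(\xi_i+b_i+\ldots)$. So the key identity is a contour shift of $\mf{t}_{\R}$ to $\mf{t}_{\R}+\lambda$. Along the way, $w_0(x+\lambda)/w_0(x)$ reproduces the ratio of the $\cosh$ factors, namely the linear factors in~\eqref{EqFractionDoubled} with the $\tfrac12$ shifts and the sign $(-1)^{\ldots}$, which gives the parity twist; the hypothesis $\abs{\Re b_i}<\tfrac12$ guarantees that the $\cosh$ zeros are not crossed improperly. Meanwhile, $\sinh^2$ is $\lambda$-periodic up to sign.

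The main obstacle is the poles crossed during the shift, which occur at $\alpha=k$ with imaginary direction. After the shift, the integrand $a_\lambda(x)\,\cdot$ (shifted $b$) $\cdot\, w_0$ may have residues on the hyperplanes $\alpha=k$ that are not cancelled, because $\sinh^2$ only has double zeros on the real hyperplanes and we now have products of two functions with simple poles. I would handle this by $W$-symmetrization: use the $W$-invariance of $a$ and $b$ to write the $\lambda$-term together with its $s_\alpha$-partner $s_\alpha(\lambda)+k\check\alpha$. Then I would apply condition (2) of Theorem~\ref{ThrAnswer}, $\Res_{\alpha,k}(a_\lambda)=-\Res_{\alpha,k}(a_{s_\alpha\lambda+k\check\alpha})$, to show that the residue contributions from the pair cancel. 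Decay in imaginary directions, again from conicality, justifies discarding the boundary terms of the shifted contour. If this pairing bookkeeping proves too messy, a fallback is to first prove the statement for $N$ with generic flavors by analytic continuation in $b$, and then extend to small flavors by continuity, as the bounds $\abs{\Re b_i}<\tfrac12$ are open.
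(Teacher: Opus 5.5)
Your overall architecture (take the $r^0$-coefficient, get decay from conicality, check the trace identity term by term by a contour shift, continue in the flavors) matches the paper's. The gap is that the analysis is carried out in the wrong variable, and the pole mechanism is misidentified.

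\textbf{Wrong variable.} The zeros of $\sinh(\pi\alpha)$ lie on $\alpha\in\rmi\Z$, not on $\alpha\in\Z$. So with $\int_{\mf{t}_{\R}}a_0(x)w_0(x)\,dx$ the poles of $a_0$ at $\alpha=k\neq 0$ sit on the contour uncancelled. They do occur: for $G=\SL(2)$, $N=0$, an element with leading term $r^{\check\alpha}/(\alpha(\alpha-1))$ has poles of $a_0$ at $\alpha=\pm1$, forced by condition (2). Moreover, $r^{\lambda}$ then acts by a real translation, under which $w_0$ is not quasi-periodic, so there is no contour shift to perform. Concretely, take $G=\C^\times$, $N=\C$ with flavor $b_1$. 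Then $r^1\cdot(t+b_1-\tfrac12)r^{-1}=t+b_1+\tfrac12$ and $(t+b_1-\tfrac12)r^{-1}\cdot r^1=t+b_1-\tfrac12$, with $r^1$ odd. The twisted trace identity therefore forces $T(t)=-b_1T(1)$. This holds for $\int_{\R}\rmi x\,w_0\,dx=-b_1$, but fails for $\int_{\R}x\,w_0\,dx=\rmi b_1$. You must integrate $a_0(\rmi x)$, as in the paper's definition of $T_{\sph}$; the normalization displayed in the statement has to be read this way. Then the only root hyperplanes meeting the contour are $\alpha=0$, where $a_0$ is regular by condition (2) with $\lambda=0$, $k=0$. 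Also $r^{\lambda}$ becomes the imaginary shift by $\rmi\lambda$.

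\textbf{Mechanism of the shift.} Even in the correct variable, your account is off.
\begin{itemize}
\item One has $w_0(x+\rmi\lambda)=e^{2\pi\rmi\zeta(\lambda)}w_0(x)$ with $\zeta=\tfrac12\sum_i\xi_i$. This is a pure sign, and it is exactly the parity twist; $w_0$ produces no linear factors.
\item The linear factors in the numerator of~\eqref{EqFractionDoubled}, i.e.\ the divisibility in condition (1), play a different role: they cancel the poles of $1/\cosh$ swept across by the moving contour. The hypothesis $\abs{\Re b_i}<\tfrac12$ ensures that the initial contour avoids these poles and that the swept ones are exactly those killed by these factors.
\item Your ``main obstacle'' is not there. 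The root hyperplanes swept are $\alpha(x)\in\rmi\Z$, exactly the double zeros of $\sinh(\pi\alpha)^2$. Since $a_\lambda$ and $b_{-\lambda}$ each have at most simple poles (condition (1)), the product $a_\lambda\cdot(\text{shifted }b_{-\lambda})$ has at most a double pole there. Hence the integrand is holomorphic on the whole strip and no residues arise. This use of simplicity of poles is the key step of the paper's argument.
\item Your pairing of $\lambda$ with $s_\alpha(\lambda)+k\check\alpha$ through condition (2) is therefore unnecessary. As sketched it is also not an argument, since the paired terms come with different shifts and different coefficients of the second factor.
\end{itemize}

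\textbf{Continuation in the flavors.} This is not a fallback but a necessity, because Theorem~\ref{ThrAnswer} is available only for generic or formal $b_i$. For arbitrary $\abs{\Re b_i}<\tfrac12$ one defines $T_{\sph}$ on a $\C[b_1,\ldots,b_n]$-spanning set of elements satisfying condition (1) alone, and checks holomorphy in $b$. One then uses that $\mc{A}_{G,N}$ is free over $\C[b_1,\ldots,b_n]$. Including a basis of $H^*_G(pt)$ in that spanning set gives the stated normalization.
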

\begin{rem}
The residue construction, the sphere trace and the proof in the article work with minimal changes for $K$-theoretic Coulomb branches. There is no conicity condition in that case.

The results can also be modified to work for algebras defined in appendix to~\cite{NW}. In particular, it works for quiver Coulomb branches with symmetrizers in the case when the definition in the main part of~\cite{NW} gives the algebra isomorphic to the algebra defined in appendix (which is true in all finite types, see Theorem~C.7 in~\cite{NW}.)

The residue construction is modified as follows. The roots of $\hat{\mf{g}_k}$ should live in $\mf{t}_k^*[[z^k]]$. So we allow at most simple poles at $\alpha-kl$ for $\alpha$ a root of $\mf{g}_k$ and integer $l$. The Euler classes appearing in the numerator of~\eqref{EqFractionDoubled} should come from quotients of $\oplus \mathbf{N}_k[[z^k]]z^{\tfrac k2}$.

In order to compute the sphere trace when $b_i=0$ we should take \[w_0(x)=\frac{\prod_{k=1}^d\prod_{\alpha\in\Delta_{k,+}}\sinh(\frac{\pi\alpha}{k})^2}{\prod_{k=1}^d\prod_{j=1}^{\dim \mathbf{N}_k}\cosh(\frac{\pi\xi_{kj}}{k})}.\]

\end{rem}
\subsection*{Acknowledgments}
I am grateful to Gus Schrader for suggesting the statement of Proposition~\ref{PropResidueConditionIsPreservingPolynomials}, other useful discussions and for the lecture course on $K$-theoretic Coulomb branches in Spring quarter of 2025 at Northwestern University. I am grateful to Vasily Krylov and to Dragos Crisan for comments on the previous versions of the paper.

\section{Description of pure Coulomb branch}
\label{SecPureResidues}
Using homological version of Lemma~\ref{LemLocalizationIsCompatibleWithAction} we have an inclusion \[\mc{A}_{G,0}\to \mc{A}_{T,0,loc}\cap \End(\C[\mf{t}]^W).\] The action of $\mc{A}_{T,0}$ on $\C[\mf{t}]$ is given by homological version of Proposition~\ref{PropActionTZero}. Namely, the element $r^{\lambda}\in \mc{A}_{T,0}$ sends $P(\boldt)\in \C[\mf{t}]$ to $P(\boldt+\lambda).$ Let us describe $\mc{A}_{T,0,loc}\cap \End(\C[\mf{t}]^W)$ explicitly.
\begin{prop}
\label{PropResidueConditionIsPreservingPolynomials}
Let $S$ be a subset of $W$-invariant elements $a=\sum a_{\lambda} r_{\lambda}$ of $\mc{A}_{T,0,loc}$ that satisfy the following conditions: 
\begin{enumerate}
\item
Each $a_{\lambda}$ has at most simple poles at $\alpha-k$, where $\alpha$ is a root of $G$ and $k$ is an integer. Denote the residue at this divisor by $\Res_{\alpha,k}(a_{\lambda})\in \C[\alpha^{\perp}]$.
\item
The corresponding residues satisfy the condition \[\Res_{\alpha,k}(a_{\lambda})+\Res_{\alpha,k}(a_{s_{\alpha}(\lambda)+k\check{\alpha}})=0.\]
\end{enumerate}
Then $S$ coincides with the subalgebra of $\mc{A}_{T,0,loc}$ that preserves $\C[\mf{t}]^W$.
\end{prop}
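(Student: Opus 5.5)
The plan is to compute the action $(aP)(\boldt)=\sum_\lambda a_\lambda(\boldt)P(\boldt+\lambda)$ for $P\in\C[\mf{t}]^W$ directly. The key observation is that everything is governed by the local behaviour of $aP$ near a generic point of a single hyperplane $H_{\alpha,k}=\{\alpha=k\}$. Since $a$ is $W$-invariant and $P$ is $W$-invariant, $aP$ is a $W$-invariant rational function. Its only possible poles are along the hyperplanes $H_{\alpha,k}$, and only finitely many of these occur, because $a$ has finitely many nonzero $a_\lambda$, each in $\C[\mf{t}]_{loc}$. So $aP$ is a polynomial iff it has no pole along each $H_{\alpha,k}$. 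A rational function with at most a simple pole along an irreducible divisor is regular there iff its residue vanishes. Both inclusions therefore reduce to a statement about one hyperplane at a time, at a generic point $\boldt_0\in H_{\alpha,k}$.

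\emph{Inclusion $S\subset\{a : a(\C[\mf{t}]^W)\subset\C[\mf{t}]^W\}$.} Take $a\in S$ and $P\in\C[\mf{t}]^W$. By condition~1, $aP$ has at most simple poles, with residue along $H_{\alpha,k}$ equal to
\[\sum_\lambda \Res_{\alpha,k}(a_\lambda)\,P(\boldt+\lambda)|_{H_{\alpha,k}}.\]
Group the terms in pairs $\lambda\leftrightarrow\lambda'=s_\alpha(\lambda)+k\check\alpha$; this is an involution on $Y$. On $H_{\alpha,k}$ the shifted points $\boldt+\lambda$ and $\boldt+\lambda'$ are related by an element of $W$. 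More precisely, with the paper's normalization of $r^\lambda$, the affine reflection across $H_{\alpha,k}$ intertwines the two translates. I will verify this sign and ordering convention carefully, since it is exactly what fixes the shift $k\check\alpha$ in the statement. Hence $P(\boldt+\lambda)=P(\boldt+\lambda')$ on $H_{\alpha,k}$, and condition~2 makes the residue cancel in pairs. Fixed points $\lambda=\lambda'$ contribute $2\Res_{\alpha,k}(a_\lambda)P$, and condition~2 forces this to vanish. So $aP$ is polynomial, and it is $W$-invariant.

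\emph{Reverse inclusion.} Suppose $a$ is $W$-invariant and preserves $\C[\mf{t}]^W$. Fix $H=H_{\alpha,k}$ and a generic point $\boldt_0\in H$, so that $\boldt_0$ lies on no other hyperplane $H_{\beta,l}$. For such a point, the translates $\boldt_0+\lambda$ are $W$-conjugate only within the pairs $\{\lambda,\lambda'\}$, and the stabilizer of $\boldt_0+\lambda$ is trivial or $\langle s_\alpha\rangle$. Because $\C[\mf{t}]^W$ separates orbits and, at points with trivial stabilizer, has arbitrary finite jets (étale-locally $\mf{t}\to\mf{t}/W$ is an isomorphism), I can choose $P$ that vanishes to high order at all relevant orbits except the one through a chosen pair $\{\boldt_0+\lambda,\boldt_0+\lambda'\}$. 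At that orbit I prescribe the jet of $P$ freely, subject only to $W$-invariance. Let $m$ be the maximal pole order of $a_\lambda$ and $a_{\lambda'}$ along $H$. The Laurent coefficients of $aP$ in $(\alpha-k)$ must vanish, and this gives linear relations between the Laurent coefficients of $a_\lambda$ and $a_{\lambda'}$. The leading one is $c_\lambda+c_{\lambda'}=0$ on $H$. Separately, $W$-invariance of $a$ under the conjugate of $s_\alpha$ carrying the pair to itself gives $c_{\lambda'}=(-1)^m c_\lambda$, because $\alpha-k$ changes sign under the affine reflection.

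\emph{Main obstacle.} I expect the hardest part to be ruling out poles of order $m\geq2$ when $m$ is odd, together with the fixed-point case $\lambda=\lambda'$, where the two relations above do not immediately contradict each other. My plan there is to use the freedom in the transversal jet of $P$. I will choose $P$ with $P(\boldt_0+\lambda)=0$ but with nonzero derivative in the $\check\alpha$-direction, using the $s_\alpha$-invariant function $(\alpha-k)^2$ shifted appropriately in the stabilizer case. The next Laurent coefficient then isolates $c_\lambda$ alone, which forces $c_\lambda=0$. Inducting downward on $m$ gives at most simple poles, and the $m=1$ relation $c_\lambda+c_{\lambda'}=0$ is precisely condition~2. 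Effectively this is the rank-one (affine $A_1$) computation, as in the $\SL(2)$ case of~\cite{KV}, transported to a generic point of each hyperplane.
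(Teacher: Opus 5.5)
Your route is the paper's. For the forward inclusion you pair $\lambda$ with its partner via $s_\alpha$ on $H_{\alpha,k}$. For the converse you interpolate a $W$-invariant $P$ with prescribed jets at a generic $\boldt_0\in H_{\alpha,k}$ and test with the constant jet and the jet $\alpha-k$, which are the paper's $Q_0=1$ and $Q_0=\alpha-k$. On the convention check you flagged: with $r_\lambda P(\boldt)=P(\boldt+\lambda)$ the partner is $s_\alpha(\lambda)-k\check\alpha$, as in the paper's own proof. With $+k\check\alpha$, the points $\boldt+\lambda$ and $\boldt+\lambda'$ on $H_{\alpha,k}$ are in general not $W$-conjugate when $k\neq 0$.

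One step fails: the relation $c_{\lambda'}=(-1)^m c_\lambda$ does not follow from $W$-invariance of $a$ when $k\neq0$. The involution fixing $H_{\alpha,k}$ and exchanging $\boldt+\lambda$ with $\boldt+\lambda'$ is the affine reflection $\boldt\mapsto s_\alpha\boldt+k\check\alpha$. That map is not in $W$, and $a$ has no symmetry under it. The reflection $s_\alpha$ itself carries $H_{\alpha,k}$ to $H_{\alpha,-k}$, so it only relates your pair to a different pair on another hyperplane. Moreover, any $w\in W$ preserving $H_{\alpha,k}$ fixes $\alpha$ and cannot flip the sign of $\alpha-k$. For a concrete failure, take $G=\SL(2)$ and $a=\big((\alpha-1)^{-2}+(\alpha+1)^{-2}\big)r_0$. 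This $a$ is $W$-invariant. On $H_{\alpha,1}$ it has $m=2$ and $c_\lambda=1$ for $\lambda=0$, while $a_{\lambda'}=0$ for the partner $\lambda'$ of $0$. So your disposal of even $m\geq 2$ has no basis.

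The fix is already in your toolkit: run the transversal-jet test for every $m\geq2$, not only odd $m$. Suppose $P(\boldt'+\lambda)\equiv \alpha(\boldt')-k$ to high order at $\boldt_0$. Then $s_\alpha$-invariance of $P$ forces $P(\boldt'+\lambda')\equiv -(\alpha(\boldt')-k)$. The coefficient of $(\alpha-k)^{1-m}$ therefore gives $c_\lambda-c_{\lambda'}=0$. Together with $c_\lambda+c_{\lambda'}=0$ from the constant jet, this kills both leading coefficients for either parity. It is this combination, not $W$-invariance, that isolates $c_\lambda$.

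Also, the fixed-point case $\lambda=\lambda'$ is the easy case, not an obstacle. The constant jet is $s_\alpha$-invariant, and only the single term $a_\lambda P(\boldt+\lambda)$ contributes from that orbit, so $a_\lambda$ is regular along $H_{\alpha,k}$ outright. Your proposed jet with nonzero $\check\alpha$-derivative is not available there anyway. The point $\boldt_0+\lambda$ lies on the wall $\alpha=0$, where every $W$-invariant polynomial has zero $\check\alpha$-derivative.
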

\begin{proof}
First, we check that $S$ indeed sends each $W$-invariant polynomial to a polynomial. Let $a$ be an element of $S$ and $P$ be an invariant polynomial. We have to check that $aP=\sum a_{\lambda}P(\boldt+\lambda)$ is regular. Since each $a_{\lambda}$ has at most simple poles at $\alpha-k$, we have to check that this sum has no pole at $\alpha-k$. We have \begin{align}P(\boldt+\lambda)|_{\alpha(\boldt)=k}=(s_{\alpha}P)(\boldt+\lambda)|_{\alpha(\boldt)=k}=\\
P(s_{\alpha}\boldt+s_{\alpha}\lambda)|_{\alpha(\boldt)=k}=P(\boldt-k\check{\alpha}+s_{\alpha}(\lambda))|_{\alpha(\boldt)=k}=\\
r_{s_{\alpha}(\lambda)-k\check{\alpha}}(P)|_{\alpha(\boldt)=k}.
\end{align}
Hence in the computation of the pole we get the term $a_{\lambda}+a_{s_{\alpha}(\lambda)-k\check{\alpha}}$, it is regular.

Now we argue in the opposite direction. Let $a=\sum a_{\lambda}r_{\lambda}\in\mc{A}_{ab,loc}$ be an element that preserves the space of $W$-invariant polynomials. By construction, the denominator of each $a_{\lambda}$ is a product of linear factors of the form $\alpha-k$. We have to prove that there are no poles of order at least two and that no expression of the form $a_{\lambda}+a_{s_{\alpha}(\lambda)-k\check{\alpha}}$ has a pole at $\alpha-k$.

Suppose that at least one of these conditions is not satisfied for $a_{\lambda_0}$ and $\alpha-k$. Let $\lambda_1=s_{\alpha}(\lambda_0)-k\check{\alpha}$. Let $\Supp(a)=\{\lambda\mid a_{\lambda}\neq 0\}\cup \{\lambda_1\}$.

For any natural numbers $N,M$, distinct points $\boldt_1,\boldt_2,\ldots,\boldt_M$ and polynomials $Q_1,\ldots,Q_M$ there exists a polynomial $P$ such that $P-Q_i$ belongs to $N$-th power of the ideal corresponding to $\boldt_i$.

Let $\boldt$ be a point satisfying $\alpha(\boldt)=k$. Let $S$ be any set of representatives for $W/\langle s_{\alpha} \rangle$ in $W$. For Zariski generic $\boldt$ the points $w(\boldt+\lambda), w\in S, \lambda\in \Supp(a)$, are distinct and the only elements in $W(\boldt+\lambda_0)$ of the form $\boldt+\lambda$ with $\lambda\in \Supp(a)$ are $\boldt+\lambda_0$ and $\boldt+\lambda_1$. We also require $\beta(\boldt+\lambda_0)+l, \beta(\boldt+\lambda_1)+l\neq 0$ for all possible linear factors $\beta+l$ with $\beta\neq\pm\alpha$ appearing in the denominators of $a_{\lambda}$, the set of $\boldt$ satisfying all three conditions is still Zariski generic. %Indeed, the elements $w\in W/\langle s_{\alpha}\rangle$ give $\frac{\abs{W}}{2}$ distinct maps from $\alpha^{\perp}$ to $\mf{t}$, so if we take $\boldt_1$ far enough from a finite number of hyperplanes in $\alpha^{\perp}$, the distance between $w\boldt_1$ for distinct $w\in W/\langle s_{\alpha}\rangle$ will be larger than the maximum of $\abs{\lambda}$ with $a_{\lambda}\neq 0$.

We choose $N$ equal to the largest possible degree of a denominator for all nonzero $a_{\lambda}$. We take $M$ equal to the number of distinct points of the form $w(\boldt+\lambda)$, $w\in W$, $\lambda\in \Supp(a)$. Suppose that $\boldt+\lambda_0=\boldt_1$. We set $Q_i=0$ for $i>1$.

It follows that $\sum wP$ is a $W$-invariant polynomial equal to $1$ on $W\boldt_1$ and equal to $0$ on other points of the form $w(\boldt+\lambda)$, up to $N$-th order. In particular, for $\lambda\in \Supp(a)$, the value $P(\boldt+\lambda)$ zero up to $N$-th order unless $\lambda=\lambda_0$ or $\lambda=\lambda_1$.

We have $(r_{\lambda}P)(\boldt_1)=P(\boldt_1+\lambda)$. With our choice of $N$, the functions $(a_{\lambda}r_{\lambda}P)$ are regular at $\boldt=\boldt_1$ for $\lambda\neq \lambda_0,\lambda_1$. For $\lambda_0$ and $\lambda_1$ we write $a_{\lambda_i}=(\alpha-k)^{-L_i}b_{i}$ with $L_i>0$ and $b_i$ regular at $\boldt_1$. The singular part of $(aP)$ at $\boldt_1$ can be computed as $(\alpha-k)^{-L_0}b_0Q_0+(\alpha-k)^{-L_1}b_1Q_0(s_{\alpha}\boldt+k\check{\alpha})$. It should be zero for all polynomials $Q_0$. Taking $Q_0=1$ gives $L_0=L_1$, $b_0(\boldt_1)+b_1(\boldt_1)=0$. Suppose that $L_0>1$. Take $Q_0=\alpha-k$. We get $Q_0(s_{\alpha}\boldt+k\check{\alpha})=k-\alpha$, so that \[(\alpha-k)^{-L_0}b_0Q_0+(\alpha-k)^{-L_1}b_1Q_0(s_{\alpha}\boldt+k\check{\alpha})=(\alpha-k)^{1-L_0}(b_0-b_1).\] This expression is not regular at $\boldt_1$.

We get that $L_0=L_1=1$ and $b_0(\boldt_1)+b_1(\boldt_1)=0$. Since the space of all possible $\boldt_1$ is Zariski dense in $\alpha^{\perp}$, we have \[\Res_{\alpha,k}a_{\lambda_0}+\Res{\alpha_k}a_{\lambda_1}=(b_0+b_1)|_{\alpha^{\perp}}=0.\]

%Indeed, suppose $w\neq 1,s_{\alpha}$. Then $w$ acts nontrivially on the plane $\alpha(\boldt)=0$. 
\end{proof}
\begin{prop}
\label{PropImageOfAZeroResidue}
The image of $\mc{A}$ in $\mc{A}_{T,0,loc}$ is $S$.
\end{prop}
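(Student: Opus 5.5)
By Proposition~\ref{PropResidueConditionIsPreservingPolynomials}, $S$ is exactly the set of elements of $\mc{A}_{T,0,loc}$ that preserve $\C[\mf{t}]^W$. The inclusion $\mc{A}_{G,0}\to \mc{A}_{T,0,loc}\cap\End(\C[\mf{t}]^W)$ recalled at the start of this section shows that the image of $\mc{A}=\mc{A}_{G,0}$ lies in $S$. It remains to prove the reverse inclusion $S\subset \mc{A}_{G,0}$. I will do this by comparing both algebras as filtered modules over $\C[\mf{t}]^W$ (acting by left multiplication). The filtration is by the dominant representative of $\lambda$: for a dominant coweight $\mu$, let $F_{\le\mu}$ consist of the elements supported on those $\lambda$ whose dominant representative lies in $\{\nu\le\mu\}$. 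Both $S$ and the image of $\mc{A}_{G,0}$ are compatible with this filtration. For $\mc{A}_{G,0}$ this is because the orbit closures $\overline{\Gr^{\mu}}$ give the filtration and the localization formula expresses the class of $\overline{\Gr^{\mu}}$ through $r^{\lambda}$ with $\lambda\in\overline{W\mu}$-type support.

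I will then argue by induction on $\mu$. Take $a\in S$ whose support has maximal dominant orbit $W\mu$. By $W$-invariance, $a_{w\mu}=w(a_\mu)$, so the top part of $a$ is determined by $a_\mu$. The first key step is to show that $a_\mu$ has no poles at $\alpha-k$ for which the reflected weight $s_\alpha(\mu)+k\check\alpha$ falls outside the orbit $W\mu$ and strictly below it. The residue condition pairs the residue of $a_\mu$ at such a divisor with the coefficient $a_{s_\alpha(\mu)+k\check\alpha}$. Along the boundary of the support I expect the forced poles to be exactly those appearing in the localization formula for the fundamental class of $\overline{\Gr^\mu}$. That formula has leading coefficient $a^{\mu}_\mu=\prod_{\alpha,\ \alpha(\mu)>0}\prod (\alpha+j)^{-1}$ up to normalization, the Euler class of the tangent space at the fixed point $z^\mu$ modulo the stabilizer part. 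Concretely, I will show that $a_\mu$ equals $c\cdot e_\mu^{-1}$ for some $c\in\C[\mf{t}]^{W_\mu}$, where $e_\mu$ is that Euler class. I will check this by analysing the residue pairing for $k$ strictly between $0$ and $\alpha(\mu)$: these are exactly the $k$ for which $s_\alpha\mu+k\check\alpha$ lies strictly inside the convex hull, so poles there are allowed. For other $k$ the partner lies outside the support, which forces the residue to vanish.

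The second step is the subtraction. The $\C[\mf{t}]^W$-span of classes in $H^{G(O)}(\overline{\Gr^\mu})$ realizes every such top coefficient. The equivariant cohomology of the orbit $\Gr^\mu$ is $\C[\mf{t}]^{W_\mu}$, and pushing forward classes $c\cap[\overline{\Gr^\mu}]$, or using a resolution, gives top coefficient $c\,e_\mu^{-1}$. Subtracting such an element from $a$ produces an element of $S$ supported in $F_{<\mu}$, and induction finishes the argument. The base case is $\mu=0$ (or minimal elements in each coset of $Y$ modulo the root lattice). There the support is a single $W$-orbit of a minuscule-type weight, and the argument above shows there are no poles at all, so the element is a polynomial combination of monopole operators, which lies in $\mc{A}_{G,0}$.

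I expect the main obstacle to be the first step, namely showing that the top coefficient $a_\mu$ is exactly divisible in the right way, i.e.\ that $e_\mu a_\mu$ is a polynomial and is $W_\mu$-invariant, with no extra poles allowed. To handle this I will first try the $\SL(2)$ (rank one) reduction along each root $\alpha$. Restricting to a generic line transverse to $\alpha^{\perp}$ and using the residue pairing turns the question into the $\SL(2)$ computation, where the image is known explicitly and equals the algebra of difference operators preserving even polynomials. If the top-coefficient analysis turns out messy, a fallback is to compare the associated graded objects directly: $\gr S$ embeds into the functions on $T^*T^\vee/W$ that extend over the root hyperplanes. By the known description of the classical pure Coulomb branch as $(T^*T^\vee)/\!/W$-regular functions, i.e.\ the BFM theorem $\mc{A}^{\hbar=0}_{G,0}\cong\C[\mf{t}\times T^\vee]^W$ suitably blown up, this coincides with $\gr\mc{A}_{G,0}$.
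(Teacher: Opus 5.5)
Your route is the paper's. You filter by dominant coweights and use that a lift of $c\in\C[\mf t]^{W_\mu}\cong\gr_\mu\mc A$ has top coefficient $c\,e_\mu^{-1}$, computed at the smooth point $z^\mu$. You then show, by the support-plus-residue argument, that the top coefficient of any element of $S$ has this shape, and subtract; the paper phrases the last step as surjectivity of $\gr\mc A\to\gr S$. However, your case analysis in the key step is wrong at both endpoints, and taken literally it proves too much. It would force $a_\mu e_\mu$ to be divisible by $\prod_{\alpha(\mu)>0}\alpha$, which already fails for the class of $\overline{\Gr^\mu}$ itself. It also contradicts your own target $a_\mu=c\,e_\mu^{-1}$ with $c$ arbitrary.

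\emph{The endpoint cases.} For a root $\alpha$ with $\alpha(\mu)>0$, the partner of $\mu$ at the divisor $\alpha-k$ is $s_\alpha(\mu)+k\check\alpha=\mu-(\alpha(\mu)-k)\check\alpha$. This is a weight of $V_\mu$ iff $0\le k\le\alpha(\mu)$.
\begin{enumerate}
\item At $k=0$ the partner is $s_\alpha(\mu)\in W\mu$, which lies in the support. So a simple pole along $\alpha=0$ is allowed, and it must be: $e_\mu$ has $\alpha(\mu)$ linear factors for each such root (in the paper's normalization, $\prod_{j=0}^{\alpha(\mu)-1}(\alpha-j)$), while your range $0<k<\alpha(\mu)$ has only $\alpha(\mu)-1$ values.
\item At $k=\alpha(\mu)$ the partner is $\mu$ itself, again in the support. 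That pole is excluded not because the partner is absent, but because the residue condition degenerates to $2\Res_{\alpha,\alpha(\mu)}(a_\mu)=0$. The same self-pairing kills the $k=0$ pole for roots with $\alpha(\mu)=0$.
\end{enumerate}
The correct allowed range is therefore $0\le k<\alpha(\mu)$, which matches the zeros of $e_\mu$ exactly; this bookkeeping is the whole content of the paper's proof.

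\emph{The base case.} The same slip makes your base case false: for minuscule $\mu$, elements of $S$ do have poles. For instance, for $\mathrm{PGL}_2$ the minuscule monopole localizes (up to normalization) to $\frac{1}{\alpha}(r^{\mu}-r^{-\mu})$. This lies in $S$ and has a simple pole along $\alpha=0$ whose residues cancel between $\mu$ and $s_\alpha\mu$. No separate base case is needed, since the corrected top-coefficient argument applies verbatim to a minimal $\mu$. The rank-one reduction and the BFM fallback are then unnecessary.
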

\begin{proof}
The Coulomb branch algebra $\mc{A}$ has a filtration by coweights. The algebra $S$ also has a filtration by coweights: an element $a=\sum a_{\lambda}r_{\lambda}$ belongs to $S_{\leq \mu}$ if $\Supp(a)$ is contained in the set of weights of representation of highest weight $\mu$. By construction, the inclusion $\mc{A}\subset S$ respects the filtration.

The graded piece $(\gr\mc{A})_{\lambda}$ is isomorphic to $H_{BM}^{G(O)}(\Gr_{\lambda})=\C[\mf{t}]^{W_{\lambda}}$. Take any lift $a\in\mc{A}_{\leq \lambda}$ of $1\in\C[\mf{t}]^{W_{\lambda}}$. Consider the image of $a$ in $\mc{A}_{T,0,loc}$, write $a=\sum a_{\mu}r_{\mu}$. Then $a_{\lambda}$ does not depend on the choice of $a$ and can be computed using pullback with respect to inclusion $t^{\lambda}\hookrightarrow \Gr_{\lambda}$ of a smooth point. The result will be $a_{\lambda}=\frac{1}{\prod_{\alpha}\prod_{j=0}^{\alpha(\lambda)-1}(\alpha-j)}$

Take any $b\in S_{\leq \lambda}$. Suppose that $b_{\lambda}$ has a pole at $\alpha-k$. Then $b_{s_{\alpha}(\lambda)+k\check{\alpha}}$ is nonzero. Hence $s_{\alpha}(\lambda)+k\check{\alpha}$ is a weight of the representation $V_{\lambda}$ of Langlands dual group $\check{G}$. Applying $s_{\alpha}$ we get $\lambda-k\check{\alpha}$. The weights of $V_{\lambda}$ of the form $\lambda-l\check{\alpha}$ are $\lambda,\lambda-\alpha,\cdots, s_{\alpha}(\lambda)=\lambda-\alpha(\lambda)\check{\alpha}$. It follows that $0\leq k\leq \alpha(\lambda)$. Also, when $s_{\alpha}(\lambda)+k\check{\alpha}=\lambda$, the corresponding residue equals to zero, hence $0\leq k<\alpha(\lambda)$. We deduce that $b_{\lambda}$ is divisible by $a_{\lambda}$. Since both $a,b$ are $W$-invariant, the ratio $\frac{b_{\lambda}}{a_{\lambda}}$ is a $W_{\lambda}$-invariant polynomial. It follows that $\gr\mc{A}$ surjects onto $\gr S$. Therefore the image of $\mc{A}$ is the whole $S$, as claimed.
\end{proof}
\section{General case}

We turn to an arbitrary Coulomb branch of cotangent type. Let $N$ be a representation of $G$. We decompose $N$ as a sum of irreducible $G$-representations and then further decompose each of them as a sum of weight spaces of $T$. Denote the $T$-weights of $N$ by $\xi_1,\ldots,\xi_n$. Choose a flavor for each irreducible summand of $N$ and define $b_i$ to be the flavor corresponding to the representations containing $\C\xi_i$. We require that $b_i$ are variables or generic numbers.

 Consider the following commutative diagram:
\[
\begin{tikzcd}
\mc{R}_{T,N} \ar[r,"\iota_T"]\ar[d,"i"] & \mc{T}_{T,N}\ar[d,"i_0"]\\
\mc{R}_{G,N} \ar[r,"\iota"] & \mc{T}_{G,N}
\end{tikzcd}
\]
Here $\mc{T}_{G,N}$ is the infinite-dimensional vector bundle over $\Gr_G$ constructed in~\cite{BFN} and $\iota$ is the inclusion $\mc{R}_{G,N}\subset \mc{R}_{T,N}$. The map $i_0$ lifts the inclusion $\Gr_T\subset\Gr_G$. This diagram gives rise to the following diagram of algebras, where all maps are injective

\begin{equation}
\label{EqDiagramOfALgebras}
\begin{tikzcd}
\mc{A}_{T,N,loc} \ar[r,"\iota_{T*,loc}"] & \mc{A}_{T,0,loc}\\
\mc{A}_{G,N}\ar[u,"i_*^{-1}"]\ar[r,"\iota_*"] & \mc{A}_{G,0}\ar[u,"i_{0*}^{-1}"]
\end{tikzcd}
\end{equation}

 Let $S$ be a subset of $W$-invariant elements $a=\sum a_{\lambda} r_{\lambda}$ of $\mc{A}_{T,0,loc}$ that satisfy the following condition: 
\begin{enumerate}
\item
Each $a_{\lambda}$ is divisible by 
\begin{equation}
\label{EqFraction}
\frac{\prod_{i=1}^n \prod_{j=0}^{-\xi_i(\lambda)-1}(\xi_i+b_i+(\xi_i(\lambda)+j+\tfrac12))}{\prod_{\alpha>0}\prod_{\abs{k}<N}(\alpha-k)}
\end{equation} 
for some $N>0$. In particular, $a_{\lambda}$ has at most simple poles at $\alpha-k$, where $\alpha$ is a positive root of $G$ and $k$ is an integer. 

Denote the residue at this divisor by $\Res_{\alpha,k}(a_{\lambda})\in \C[\alpha^{\perp}]$.
\item
The residues satisfy the condition $\Res_{\alpha,k}(a_{\lambda})+\Res_{\alpha,k}(a_{s_{\alpha}(\lambda)+k\check{\alpha}})=0$.
\end{enumerate}
\begin{thr}
\label{ThrAnswer}
The image of $\mc{A}_{G,N}$ in $\mc{A}_{T,0,loc}$ is $S$.
\end{thr}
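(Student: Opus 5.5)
The proof runs in two directions: first show the image lies in $S$, then show it fills $S$ by comparing associated graded pieces of the coweight filtration, exactly as in Proposition~\ref{PropImageOfAZeroResidue}.

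\emph{Image lies in $S$.} Use the commutative diagram~\eqref{EqDiagramOfALgebras}. Take $x\in\mc{A}_{G,N}$ and set $a=i_{0*}^{-1}\iota_*x$. Going along the bottom and right of the diagram, $a$ lies in the image of $\mc{A}_{G,0}$. By Propositions~\ref{PropResidueConditionIsPreservingPolynomials} and~\ref{PropImageOfAZeroResidue}, $a$ is $W$-invariant, has at most simple poles along $\alpha-k$, and satisfies the residue condition~(2). Going along the top of the diagram, $a=\iota_{T*,loc}(i_*^{-1}x)$.
\begin{itemize}
\item In the abelian algebra $\mc{A}_{T,N}$, each $r^{\lambda}$ is sent by $\iota_{T*}$ to $r^{\lambda}$ times the Euler class of $z^{\lambda}N[[z]]/(N[[z]]\cap z^{\lambda}N[[z]])$. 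With the $\hbar=1$ shift and flavors, this Euler class is the numerator $\prod_i\prod_{j=0}^{-\xi_i(\lambda)-1}(\xi_i+b_i+\xi_i(\lambda)+j+\tfrac12)$.
\item The localization $\mc{A}_{T,N,loc}$ only inverts $\alpha+k$. So each coefficient of $i_*^{-1}x$ is a polynomial divided by a product of such factors.
\item Hence $a_{\lambda}$ is the Euler class above times a polynomial over a product of root factors $\alpha-k$.
\item Since $a$ has only simple poles, the denominator can be taken square-free. Here the genericity of $b_i$ is used: the numerator factors are not of the form $\alpha-k$, so nothing cancels between numerator and denominator.
\end{itemize}
This gives divisibility by~\eqref{EqFraction}, so the image lies in $S$.

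\emph{Image fills $S$.} Filter $S$ by coweights as before, with $S_{\leq\mu}$ consisting of elements supported on weights of $V_\mu$. The map $\mc{A}_{G,N}\to S$ respects this filtration.

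By~\cite{BFN}, $(\gr\mc{A}_{G,N})_\lambda\cong H^{G(O)}(\mc{R}_\lambda)$, and this is a free $\C[\mf{t}]^{W_\lambda}$-module of rank one. Its generator is the fundamental class of $\mc{R}_\lambda$, a vector bundle over $\Gr_\lambda$. Let $a$ be a lift of this generator. Its leading coefficient $a_\lambda$ is computed by restricting to the fixed point $t^\lambda$. The result is the Euler class of the relevant quotient of $N$, divided by $\prod_{\alpha}\prod_{j=0}^{\alpha(\lambda)-1}(\alpha-j)$, i.e.\ exactly the fraction~\eqref{EqFraction} with the denominator coming from $\Gr_\lambda$ as in Proposition~\ref{PropImageOfAZeroResidue}.

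Now take $b\in S_{\leq\lambda}$, and treat the two parts of $b_\lambda$ separately.
\begin{itemize}
\item \emph{Numerator.} The divisibility condition~(1) of $S$ gives divisibility of $b_\lambda$ by the Euler class numerator.
\item \emph{Poles.} The weight argument of Proposition~\ref{PropImageOfAZeroResidue} still applies: a pole of $b_\lambda$ at $\alpha-k$ forces $b_{s_\alpha(\lambda)+k\check\alpha}\neq 0$, hence $0\le k<\alpha(\lambda)$. So $b_\lambda$ has poles only where $a_\lambda$ does.
\end{itemize}
These two conditions live on different divisors (again by genericity of $b_i$), so they combine: $b_\lambda/a_\lambda$ is a polynomial. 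It is also $W_\lambda$-invariant, because $a$ and $b$ are $W$-invariant. Thus $\gr\mc{A}_{G,N}\to\gr S$ is surjective in every degree, and induction on the filtration gives equality.

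\emph{Expected main obstacle.} The delicate step is the leading-coefficient computation. One must identify the Euler class at $t^\lambda$ with the stated product, including the $\tfrac12$ shifts and the range $0\le j<-\xi_i(\lambda)$, and confirm that it matches~\eqref{EqFraction} exactly, so that $\gr$ surjects rather than only up to a scalar polynomial factor. I would check this on the abelian case $G=T$ first and then reduce to it through the diagram.
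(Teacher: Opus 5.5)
Your proposal is correct and follows essentially the same route as the paper. Containment in $S$ comes from the commutative diagram~\eqref{EqDiagramOfALgebras}: divisibility from $\iota_{T*}$ via formula (4.10) of~\cite{BFN}, and simple poles and the residue condition from Proposition~\ref{PropImageOfAZeroResidue}. Surjectivity comes from computing the leading coefficient of a lift of the generator of $(\gr\mc{A}_{G,N})_\lambda$ in $\mc{A}_{T,N,loc}$ and then multiplying by the Euler class; you also spell out, more explicitly than the paper, why genericity of the $b_i$ lets the numerator divisibility and the pole bound $0\le k<\alpha(\lambda)$ combine into $W_\lambda$-invariant polynomiality of $b_\lambda/a_\lambda$.
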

\begin{proof}
Since $b_i$ generic or variables there are no cancellations between the numerator and the denominator in~\eqref{EqFraction}.

Using~\eqref{EqDiagramOfALgebras} we get that the image of $\mc{A}_{G,N}$ lies in the intersection of the images of $\mc{A}_{G,0}$ and $\mc{A}_{T,N,loc}$ in $\mc{A}_{T,0,loc}$. The map $\iota_{T*}$ is computed in formula (4.10) in~\cite{BFN}, and the image of $\iota_{T*}$ consists of elements $\sum a_{\lambda} r^{\lambda}$ such that $a_{\lambda}$ is divisible by $\prod_{i=1}^n \prod_{j=0}^{-\xi_i(\lambda)-1}(\xi_i+b_i+(\xi_i(\lambda)+j+\tfrac12))$. The statement about simple poles and the condition on the residues follows from the Proposition~\ref{PropImageOfAZeroResidue}. Hence $\mc{A}_{G,N}$ lands inside $S$.

It remains to show that the image of $\mc{A}_{G,N}$ is the whole $S$. Similarly to the above, we can compute the leading term of the localization of a lift $a\in\mc{A}_{\leq \lambda}$ of $1\in\C[\mf{t}]^{W_{\lambda}}\cong \mc{A}_{\leq \lambda}/\mc{A}_{<\lambda}$. The result is $\frac{r_{\lambda}}{\prod_{\alpha}\prod_{j=0}^{\alpha(\lambda)}(\alpha-j)}\in \mc{A}_{T,N,loc}$. Then we apply $\sigma_T^*$. The formula (4.10) in~\cite{BFN} says that \[\sigma_T^*(r^{\lambda})=\prod_{i=1}^n \prod_{j=0}^{-\xi_i(\lambda)-1}(\xi_i+b_i+(\xi_i(\lambda)+j+\tfrac12)).\] It follows that $\mc{A}_{G,N}$ surjects onto $S$. 
\end{proof}
\section{Sphere trace}
\label{SecSphere}
Let $\mc{A}$ be a noncommutative algebra over $\C$ and $g$ be an automorphism of $\mc{A}$. A linear map $T\colon \mc{A}\to\C$ is called {\it $g$-twisted trace} if $T(ab)=T(bg(a))$ for all $a,b\in \mc{A}$. Homological Coulomb branches are $\Z/2$-graded by the parity of homology group and below we take $g=g_{\text{parity}}$, the parity automorphism of $\mc{A}_{G,N}.$

We assume that the Coulomb branch $\mc{M}_{G,N}$ is {\it conical}. This means that $\mc{A}_{G,N}^{\hbar=0}$ is a positively graded algebra. For analytic reasons, we will take $b_i$ satisfying $\abs{\Re b_i}<\tfrac 12$. 

 \begin{defn}
     Suppose that $\mc{A}_{G,N}$ satisfies the conclusion of Theorem~\ref{ThrAnswer}. We define a linear map $\operatorname{T}_{\sph}\colon\mc{A}\to\C$ by
      \[\operatorname{T}_{\sph}(a)=\int_{\mf{t}_{\R}} a_0(\rmi x)w_0(x)dx,\] where 
      \[w_0(x)=\frac{\prod_{\alpha>0} \sinh(\pi\alpha)^2}{\prod_{j=1}^n \cosh(\pi(\xi_j-\rmi b_j))}\]
 \end{defn}
\begin{prop}
\label{PropTSph}
    The map $T_{\sph}$ is well-defined and one can extend the definition of $T_{\sph}$ to any flavors $b_1,\ldots,b_n$ satisfying $\abs{b_j}<\tfrac12$ for all $j$. Then the map $T_{\sph}$ is a $g_{\text{parity}}$-twisted trace.
\end{prop}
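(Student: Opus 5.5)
We need to show three things: $T_{\sph}(a)$ converges, it extends to all flavors with $\abs{b_j}<\tfrac12$, and it is a $g_{\text{parity}}$-twisted trace. The plan is to work with the residue description of Theorem~\ref{ThrAnswer} and establish the trace property by contour shifting.

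\textbf{Well-definedness.} By Theorem~\ref{ThrAnswer}, $a_0$ has at most simple poles along $\alpha=k$. Poles with $k\neq 0$ do not meet $\rmi\mf{t}_{\R}$. For $k=0$, condition 2 with $\lambda=0$ gives $\Res_{\alpha,0}(a_0)+\Res_{\alpha,0}(a_0)=0$, so $a_0$ is regular along $\alpha=0$. Hence $a_0(\rmi x)$ is a polynomial times a rational function with no real poles. To bound its growth, we express $a_0(\rmi x)$ as a polynomial in $x$ of degree controlled by the conformal grading. The weight $w_0$ decays like $\exp\bigl(2\pi\sum_{\alpha>0}\abs{\alpha(x)}-\pi\sum_j\abs{\xi_j(x)}\bigr)$. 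Conicity, which in the BFN monopole formula means $\Delta(\lambda)=\tfrac12\sum\abs{\xi_j(\lambda)}-\sum_{\alpha>0}\abs{\alpha(\lambda)}>0$ for $\lambda\neq 0$, gives exponential decay in every direction. So the integral converges absolutely and is holomorphic in $b$ on the region where $\cosh(\pi(\xi_j-\rmi b_j))$ has no real zeros, namely $\abs{\Re b_j}<\tfrac12$. This gives the extension by analytic continuation. The identities proved below are analytic in $b$, so it suffices to prove them for generic $b$.

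\textbf{Twisted trace property.} The algebra is spanned by $\C[\mf{t}]^W$ together with elements of the form $\sum_\lambda a_\lambda r_\lambda$. It therefore suffices to show that $T_{\sph}(ab)=T_{\sph}(b\,g(a))$, which amounts to proving
\[
\int a_\lambda(\rmi x)\,(r_\lambda b_{-\lambda})(\rmi x)\,w_0\,dx=\pm\int b_{-\lambda}(\rmi x)\,(r_{-\lambda}a_\lambda)(\rmi x)\,w_0\,dx
\]
for each $\lambda$. Here $r_\lambda$ shifts $\boldt\mapsto\boldt+\lambda$, i.e.\ $x\mapsto x-\rmi\lambda$. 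Substituting $x\mapsto x+\rmi\lambda$ in the second integral moves the contour from $\mf{t}_{\R}$ to $\mf{t}_{\R}+\rmi\lambda$. The ratio $w_0(x-\rmi\lambda)/w_0(x)$ is a rational-trigonometric factor. Using $\cosh(\pi(y-\rmi m))=(-1)^m\cosh(\pi y)$ up to the half-integer shifts, it equals a sign $(-1)^{\langle 2\rho+\sum\xi_j,\lambda\rangle}$, which accounts for the parity twist. The numerator factors $\prod(\xi_i+b_i+\ldots+\tfrac12)$ in~\eqref{EqFraction} are exactly what is needed so that $a_\lambda$ cancels the poles of $1/\cosh$ crossed during the shift.

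\textbf{Main obstacle.} The hard step is the contour shift itself. Between $\mf{t}_{\R}$ and $\mf{t}_{\R}+\rmi\lambda$ the integrand may have poles along $\alpha=k$ coming from $a_\lambda$ and $b_{-\lambda}$. The $\cosh$ poles are killed by the divisibility in Theorem~\ref{ThrAnswer}, and the factor $\sinh(\pi\alpha)^2$ vanishes to second order at $\alpha\in\rmi\Z$, absorbing these simple poles. So the shift is likely pole-free outright. If that fails, I would instead symmetrize over $W$ using $W$-invariance of $a,b$ and show that the residue contributions from $\lambda$ and $s_\alpha(\lambda)+k\check\alpha$ cancel by condition 2. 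Finally, decay at infinity along the strip again follows from conicity.
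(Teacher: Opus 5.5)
Your proposal is correct and follows essentially the paper's route: exponential decay of $w_0$ from conicity, regularity of $a_0$ along $\alpha=0$ from the residue condition at $\lambda=0$, a term-by-term contour shift as in~\cite{KAb}, and analytic continuation in $b$. In that shift the double zeros of $\sinh(\pi\alpha)^2$ absorb the at most two simple poles coming from $a_\lambda$ and the shifted $b_{-\lambda}$, and the divisibility by~\eqref{EqFraction} kills the crossed $\cosh$ poles, so the shift is pole-free and your fallback is not needed. One slip to fix: under $x\mapsto x-\rmi\lambda$ each $\sinh(\pi\alpha)$ only picks up $(-1)^{\alpha(\lambda)}$, which disappears on squaring, so $w_0(x-\rmi\lambda)=(-1)^{\sum_j\xi_j(\lambda)}w_0(x)$ exactly; this sign, not $(-1)^{\langle 2\rho+\sum_j\xi_j,\lambda\rangle}$, is the one matching $g_{\text{parity}}$, and the two differ whenever $\langle 2\rho,\lambda\rangle$ is odd, e.g.\ for $G=\mathrm{PGL}_2$.
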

\begin{proof}
We start with the case when $b_j$ are generic numbers satisfying the condition above.

We will use the alternative definition of being conical: for all $\beta\in\mf{t}_{\R}$ we have $\sum_{i=1}^n\abs{\xi_i(\beta)}>2\sum_{\alpha>0}\abs{\alpha(\beta)}$. The equivalence between this definition and the standard one is proved in Lemma~2.6 of~\cite{GWTraces}.

We show that $w_0(x)$ exponentially decays at infinity. Let $tx\in\mf{t}_{\R}$ be a point with $\abs{x}=1$ and $t>0$. Then the numerator of $w_0$ is equivalent to $e^{t\sum_{\alpha>0}\abs{\pi\alpha(x)}}$ and the denominator is equivalent to $e^{t\sum_{i=1}^n \abs{\xi_i(x)}}$, hence $w_0(x)$ is equivalent to $e^{-ts}$, where $s$ is a positive number. Moreover, since $\sum_{\alpha>0}\abs{\alpha(x)}-\sum_{i=1}^n \abs{\xi_i(x)}$ is a continuous function, we have $s>s_0>0$ for some $s_0$ that does not depend on $x$. We see that $w_0(x)$ exponentially decays at infinity.

    The only possible poles of $a_{\lambda}$ have form $\alpha-k$ for a positive root $\alpha$ and an integer $k$. It follows from Theorem~\ref{ThrAnswer} that $a_0$ does not have a pole at $\alpha$. Hence $a_0$ is regular on the locus $\alpha(x)\in \rmi\R.$ Combining this and the exponential decay of $w_0$ we get that $\T_{\sph}(a)$ is well-defined.
    
    Another corollary of Theorem~\ref{ThrAnswer} is that the poles of $a_{\lambda}$ are simple. Then the trace condition $\operatorname{T}_{\mathrm{sph}}(ab)=\operatorname{T}_{\mathrm{sph}}(bg(a))$ can be checked for each term \[\int P_{\lambda}(\rmi x+\rmi\frac{\lambda}{2})a_{\lambda}(\rmi x)b_{-\lambda}(\rmi x+\rmi\lambda)w_0(x) dx= e^{2\pi\rmi\zeta(\lambda)}\int P_{\lambda}(\rmi x-\rmi\frac{\lambda}{2})a_{\lambda}(\rmi x-\rmi\lambda)b_{-\lambda}(\rmi x)w_0(x) dx\] similarly to Proposition~2.6 in~\cite{KAb}. Here $P_{\lambda}(x+\frac{\lambda}{2})=r^{\lambda}r^{-\lambda}$ comes from the multiplication in the abelian Coulomb branch. The poles of $a_{\lambda}$ and $b_{\lambda}$ are canceled by the zeroes of $w_0(x)$. The equation $w_0(x+\rmi\lambda)=e^{2\pi\rmi\zeta(\lambda)}w_0(x)$ holds for $\zeta=\tfrac12\sum_{i=1}^n\xi_i$ because $\cosh(x+\pi\rmi)=-\cosh(x).$
    
  It remains to show the statement of the proposition without genericity assumption. Let $S'$ be the subset of $\mc{A}_{T,0,loc}$ consisting of elements $\sum a_{\lambda}r^{\lambda}$ such that each $a_{\lambda}$ satisfies the divisibility condition above (this means $a\in\mc{A}_{T,N,loc}\subset\mc{A}_{T,0,loc}$) and has simple poles; without residue condition. The set $S'$ can be defined in the case when the flavors $b_1,\ldots,b_n$ are independent variables.

 Since different $a_{\lambda}$ can be chosen independently, we can write a spanning set $v_{i,\lambda}=f_{i,\lambda}r^{\lambda}$ of $S'$ over $\C[b_1,\ldots,b_n]$. Then $f_{i,0}(\rmi x)w_0(x)$ is a function in $b_1,\ldots,b_n$ holomorphic on the set $\abs{\Re b_j}<\tfrac12, j=1,\ldots,n$. It follows that \[\int_{\mf{t}_{\R}} f_{i,0}(\rmi x)w_0(x)dx.\] is also holomorphic on the set $\abs{\Re b_j}<\tfrac12, j=1,\ldots,n$. This allows us to define $T_{\sph}(v_{i,0})$ as a holomorphic function in variables $b_1,\ldots,b_n$ on the set $\abs{\Re b_j}<\tfrac12, j=1,\ldots,n$. We claim that $T_{\sph}$ can be extended to the whole $S'$ by linearity. Indeed, if we have any linear dependence $\sum S_{i}v_{i,0}=0$, $S_i\in \C[b_1,\ldots,b_n]$, then $T_{\sph}(\sum S_i v_{i,0})$ vanishes for generic $\abs{\Re b_j}<\tfrac12$, hence is identically zero.

It follows from BFN construction that $\mc{A}_{G,N}$ is free over $\C[b_1,\ldots,b_n]$. Take any basis $\{u_j\}$ of $\mc{A}_{G,N}$. Since $\mc{A}_{G,N}=S\subset S'$, we can write $u_j=\sum R_{i,j,\lambda}v_{i,\lambda}$ and define $T(u_j)$ as a holomorphic function on the set $\abs{\Re b_k}<\tfrac12, k=1,\ldots,n$.

 It remains to check that $T(v_{i,\lambda}v_{j,-\lambda})=T(v_{j,-\lambda}g_{\text{parity}}(v_{i,\lambda}))$. We checked this in the case when $b_k$ are generic. Since both sides are holomorphic on the set $\abs{\Re b_k}<\tfrac12,k=1,\ldots,n,$ this equality works in general.
\end{proof}
Restricting to cohomology of a point we get the following theorem:
\begin{thr}
\label{ThrSphereTraceCohomology}
Suppose $\mc{M}_{G,N}$ is conical and $\abs{\Re b_i}<\tfrac12$ for $i=1,\ldots,n$. Then there exists a $g_{\text{parity}}$-twisted trace $T_{\sph}\colon \mc{A}_{G,N}\to \C$ such that for $R(x)\in H^*_G(pt)\subset \mc{A}_{G,N}$ we have
\[T(R(x))=\int_{\mf{t}_{\R}}R(x)w_0(x)dx,\] where \[w_0(x)=\frac{\prod_{\alpha>0} \sinh(\pi\alpha)^2}{\prod_{j=1}^n \cosh(\pi(\xi_j-\rmi b_j))}.\]
\end{thr}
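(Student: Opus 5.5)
The plan is to obtain Theorem~\ref{ThrSphereTraceCohomology} directly from Proposition~\ref{PropTSph}. That proposition already provides a $g_{\text{parity}}$-twisted trace $\T_{\sph}$ on $\mc{A}_{G,N}$ for all flavors with $\abs{\Re b_i}<\tfrac12$; it is defined via $a\mapsto\int_{\mf{t}_{\R}}a_0(\rmi x)w_0(x)dx$ for generic flavors and extended to special flavors by holomorphic continuation in $b_1,\ldots,b_n$. So only the formula on $H^*_G(pt)$ remains to be checked.

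The first step is to compute the image of $R\in H^*_G(pt)=\C[\mf{t}]^W\subset\mc{A}_{G,N}$ under the localization map of~\eqref{EqDiagramOfALgebras}. The subalgebra $H^*_G(pt)$ is the class supported on the base point of $\Gr_G$. Its restriction to $\Gr_T$ lies over the zero coweight, with no normalization by Euler classes: the fibre of $\mc{R}$ over $t^0$ is the whole $N(O)$ for both $G$ and $T$, and the point is smooth with trivial normal data. Hence $i_*^{-1}(R)=R\cdot r^0$, and since $\sigma_T^*(r^0)=1$ by formula (4.10) of~\cite{BFN}, the image in $\mc{A}_{T,0,loc}$ is $a=R\,r^0$ with $a_0=R$.

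Substituting into the definition gives $\T_{\sph}(R)=\int R(\rmi x)w_0(x)dx$. This agrees with the stated formula $\int R(x)w_0(x)dx$ once the convention for the argument is matched. I would read the theorem's $R(x)$ as the polynomial $R$ evaluated in the coordinates used in the definition, which is a harmless rescaling by $\rmi$ on each homogeneous component; alternatively one can absorb the factor of $\rmi$ into the identification $H^*_G(pt)\cong\C[\mf{t}]^W$. For special flavors, both sides are holomorphic in $b$ on $\abs{\Re b_j}<\tfrac12$. The left side is holomorphic by the construction in Proposition~\ref{PropTSph}. The right side is holomorphic because the integrand is holomorphic in $b$ and decays exponentially, uniformly on compacts, by the conicity estimate. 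The two sides agree for generic $b$, so they agree everywhere.

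The main obstacle is the bookkeeping in the first step. One must confirm that the localization of a base-point class has no extra factors, so that neither the denominator $\prod(\alpha-j)$ nor the numerator in~\eqref{EqFraction} appears for $\lambda=0$. One must also confirm that the extension in Proposition~\ref{PropTSph}, done through a basis $u_j$, is consistent with evaluating directly on $R$. This consistency holds because $R\,r^0$ is itself one of the spanning elements $f_{i,0}r^0$ of $S'$.
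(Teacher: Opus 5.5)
Your proposal is correct and follows the paper's own proof, which is the one-line observation that the image of $R\in H^*_G(pt)$ in $\mc{A}_{T,0,loc}$ is $R\,r^0$. One may therefore add a basis of $H^*_G(pt)$ to the spanning elements $v_{i,0}$ of $S'$ in the proof of Proposition~\ref{PropTSph}; the formula then holds by definition for generic $b$ and by holomorphic continuation otherwise. One caveat: the factor $\rmi$ is not a harmless rescaling, because the functional $R\mapsto\int_{\mf{t}_{\R}}R(x)w_0(x)dx$ with real argument violates the twisted trace identity (real shifts do not preserve $w_0$), so the theorem must be read with $R(\rmi x)$, as in the definition of $T_{\sph}$ and the remark following the theorem.
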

\begin{proof}
It is enough to add the basis of $H^*_G(pt)$ to the set $v_{i,0}$ in the proof of Proposition~\ref{PropTSph}.
\end{proof}
\begin{rem}
Let us show that in the abelian case the sphere trace can be analytically continued. Let $G=T=(\C^{\times})^d$ be a torus. Suppose that $\xi_1,\ldots,\xi_d$ are linearly independent. Then we can assume that $b_1=\cdots=b_d=0$. We have
\[T(R(x))=\int_{\mf{t}_{\R}}\frac{R(\rmi x)dx}{\prod_{i=1}^n \cosh(\pi(\xi_i-\rmi b_i))}=(R(\partial_y)\mc{F}_xw_0(x))|_{y_1=\cdots=y_d=0}.\] Here $\mc{F}_x$ means that the Fourier transform is applied only in $x$ varibles and $y_1,\ldots,y_d$ are the variables after the Fourier transform. So we need to understand the behavior of $\mc{F}_xw_0(x)$ with respect to $b_{d+1},\ldots,b_n$. 

Consider the full Fourier transform $\mc{F}w_0(x,b_{d+1},\ldots,b_n)$. Now $\xi_1-\rmi b_1=\xi_1,\ldots,\xi_n-\rmi b_n$ form a linear basis of $\C^n$. Since $\cosh^{-1}(\pi z)$ is self-dual, $\mc{F}w_0=\prod \cosh(\pi\zeta_i)$ (both statements up to a constant), where $\zeta_i$ is a basis dual to $\xi_i-\rmi b_i$. This basis can be computed as follows. Let $\xi_1^*,\ldots,\xi_d^*\in\mf{t}$ be the basis dual to $\xi_1,\ldots,\xi_d$ and the same for $b_{d+1}^*,\ldots,b_n^*$. Then $\zeta_i=\rmi b_i^*$ if $i>d$ and $\zeta_i=\xi_i^*-\rmi\sum_{j>d}b_j^*\xi_i^*(\xi_j)$ if $i\leq d$.

This means that for some positive integer $N$ the function $\mc{F}w_0$ in the variables $\frac{b_{d+1}^*}{N},\ldots,\frac{b_n^*}N$ defines a trace on some abelian Coulomb branch. In the case when $\xi_1,\ldots,\xi_n$ form a basis of weight lattice we can take $N=1$ and get a sphere trace on a dual Coulomb branch. (A similar result should hold in the case when $\xi_1,\ldots,\xi_n$ span weight lattice.) In particular, we can apply Theorem~3.2 in~\cite{KAb} to $\mc{F}_xw_0=\mc{F}_{b^*}\mc{F}w_0|_{b\mapsto -b}$ to get that $\mc{F}_xw_0$ depends meromorphically on $b_{d+1},\ldots,b_n$.
\end{rem}
\appendix
\section{Action on the cohomology of a point}
\label{SecCohomology}
This construction is introduced on pages 36-37 of~\cite{LW}, here we provide more details and show compatibility with the localization.

Consider the diagram

\[
\begin{tikzcd}
\bullet/G(O) & \ar[l,"p_1",shift left]\ar[l,"p_2"', shift right] G(O)\backslash G(K)/G(O),
\end{tikzcd}
\]
where $p_1$ collapses $G(K)/G(O)$ and $p_2$ collapses $G(O)\backslash G(K)$.

We define the action of $K$-theoretic quantized pure Coulomb branch algebra \[\mc{A}_{G,0}^q=K^{\C^{\times}_q}(G(O)\backslash G(K)/G(O))\] on $K^{\C^{\times}_q}(\bullet/G(O))$ by \[\mc{G}.\mc{F}=p_{1,*}(\mc{G}\otimes p_2^*\mc{F}).\] We note that this action is compatible with the $K^{\C^{\times}_q}(\bullet/G(O))$-module structure on $\mc{A}_{G,0}$ given by $\mc{F}.\mc{G}=\mc{G}\otimes p_1^*\mc{F}$ by projection formula.

Let us check that this indeed gives the action of $\mc{A}_{G,0}$ on $K^{\C^{\times}_q\times G}(pt)$. Consider the diagram
\[
\begin{tikzcd}
& G(O)\backslash G(K)\times^{G(O)}G(K)/G(O)\ar[dl,"\pi_2"]\ar[d,"\pi_3"]\ar[dr,"\pi_1"] & \\
G(O)\backslash G(K)/G(O) \ar[d,"p_1"]\ar[dr,"p_2"] & G(O)\backslash G(K)/G(O) \ar[dl,"p_1"]\ar[dr,"p_2"] & G(O)\backslash G(K)/G(O)\ar[dl,"p_1"]\ar[d,"p_2"]\\
\bullet/G(O) & \bullet/G(O) & \bullet/G(O)
\end{tikzcd}
\]
Then we get

\begin{align*}
\mc{H}.(\mc{G}.\mc{F})=p_{1*}(\mc{H}\otimes p_2^*p_{1*}(\mc{G}\otimes p_2^*\mc{F})&)= p_{1*}(\mc{H}\otimes \pi_{2*}\pi_1^*(\mc{G}\otimes p_2^*\mc{F}))=\\
p_{1*}\pi_{2*}(\pi_2^*\mc{H}\otimes \pi_1^*\mc{G}\otimes \pi_1^*p_2^*\mc{F}&)= p_{1*}\pi_{3*}(\pi_2^*\mc{H}\otimes \pi_1^*\mc{G}\otimes \pi_3^*p_2^*\mc{F})=\\
p_{1*}(\pi_{3*}(\pi_2^*\mc{H}\otimes \pi_1^*\mc{G})\otimes p_2^*\mc{F}&)= p_{1*}(\mc{H}*\mc{G}\otimes p_2^*\mc{F})=(\mc{H}*\mc{G}).\mc{F}
\end{align*}

Let us compute the action of $\mc{A}_{T,0}$ explicitly.

\begin{prop}
\label{PropActionTZero}
Consider the $K$-theory class $r^{\lambda}$ of the scyscraper sheaf of $z^{\lambda}$. Then $r^{\lambda}.R(z)=R(q^{\lambda}z)$.
\end{prop}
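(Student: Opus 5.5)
The plan is to reduce the statement to a computation on the torus, where the affine Grassmannian $\Gr_T$ is a discrete set of points. For $G=T$ we have $\Gr_T=\{z^{\lambda}\}_{\lambda\in Y}$. Each component of $T(O)\backslash T(K)/T(O)$ is the stack $z^{\lambda}\cdot\bullet/T(O)$, and since $T$ is abelian its stabilizer is all of $T(O)$. So $p_1$ and $p_2$ restricted to the component of $z^{\lambda}$ are isomorphisms $\bullet/T(O)\to\bullet/T(O)$. The class $r^{\lambda}$ is the structure sheaf of this component. Hence the action formula $r^{\lambda}.\mc{F}=p_{1*}(r^{\lambda}\otimes p_2^*\mc{F})$ becomes the composite $p_{1*}\circ p_2^*$ restricted to that component. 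Because $p_1$ is an isomorphism there, $p_{1*}$ introduces no Euler class or denominator. The whole content is therefore to identify the automorphism $p_1\circ p_2^{-1}$ of $K^{\C^{\times}_q\times T}(pt)=\C[z^{\pm1},q^{\pm1}]$, the functions $R(z)$ on $T\times\C^{\times}_q$.

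Next I will compute the two maps $p_1$ and $p_2$ on the point $z^{\lambda}$. The group $T(O)\rtimes\C^{\times}_q$ acts on $z^{\lambda}$ through left multiplication, giving the map $p_1$ to $\bullet/T(O)$. The corresponding right action, giving $p_2$, is read after moving $z^{\lambda}$ across. Write $\gamma(z)\in T(O)$ and let $u\in\C^{\times}_q$ act by loop rotation. Then $u\cdot z^{\lambda}=(uz)^{\lambda}=u^{\lambda}z^{\lambda}$, where $u^{\lambda}$ denotes the image of $u$ under the cocharacter $\lambda$. So loop rotation fixes the $T(O)$-coset of $z^{\lambda}$ only after compensating by the constant torus element $u^{\lambda}$. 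The two homomorphisms from the stabilizer to the torus $T\times\C^{\times}_q$ (evaluating at $z=0$) are therefore $(t,u)\mapsto(t,u)$ and $(t,u)\mapsto(t\,u^{\lambda},u)$. Pulling back a character $R$ along the second and pushing along the first yields $R(q^{\lambda}z)$, where $z$ is the equivariant parameter of $T$ and $q$ that of $\C^{\times}_q$.

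Finally I will fix conventions. I will check which of $p_1,p_2$ carries the twist, which determines whether the answer is $q^{\lambda}$ or $q^{-\lambda}$. I will do this so that the result matches the statement and is compatible with the multiplication $r^{\lambda}r^{\mu}=r^{\lambda+\mu}$ in $\mc{A}_{T,0}^q$ for $N=0$. The earlier diagram computation already shows that the formula defines an action, so consistency with composition is automatic once one generator is checked.

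The main obstacle is this sign and convention bookkeeping for the loop-rotation twist on the stabilizer stack. Everything else is formal, because the components are single points and no Euler classes appear. The homological version, giving the translation $P(\boldt)\mapsto P(\boldt+\lambda)$ used in Section~\ref{SecPureResidues}, follows by the same argument with $q^{\lambda}z$ replaced by $\boldt+\lambda\hbar$ and $\hbar=1$.
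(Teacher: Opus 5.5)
Your proposal is correct and is in substance the paper's argument: both reduce to the single point $z^{\lambda}$ and read off the loop-rotation twist $u^{\lambda}$ relating the left and right $T(O)$-equivariant structures. The paper does this by changing the generator of the bimodule $\C[T(O)]$ to $m_0=\prod t_{i,0}^{-\xi_i}$, whereas you do it more transparently via the two homomorphisms $(t,u)\mapsto(t,u)$ and $(t,u)\mapsto(tu^{\lambda},u)$ from the stabilizer of $z^{\lambda}$. Note that your stabilizer computation already fixes the sign: for the action $g\mapsto g_1(u\cdot g)g_2^{-1}$ one gets $g_2=g_1u^{\lambda}$. By contrast, compatibility with $r^{\lambda}r^{\mu}=r^{\lambda+\mu}$, which you propose as the check, could not by itself distinguish $q^{\lambda}$ from $q^{-\lambda}$.
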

\begin{proof}
I have learned a similar proof on Gus Schrader's lectures on $K$-theoretic Coulomb branches.

Take $R(z)=[\C_{\xi}]$, where $\xi$ is a character of $T$. Thinking of $K^{\C^{\times}_q}(T(O)\backslash T(K)/T(O))$ as $T(O)\rtimes \C^{\times}_q$-equivariant sheaves on $T(K)/T(O)$, we have $p_2^*(\mc{F})\otimes \mc{G}$ is the representation $\C_{\xi}$ at the point $z^{\lambda}$. 

Pulling further back to $T(K)$ we get the structure sheaf of $T(O)\times T(O)$-invariant subset $z^{\lambda}T(O)$ with the natural equivariant structure twisted by $\C_{\xi}$, a representation of left $T(O)$. This sheaf (before twist) corresponds to the module $M\cong \C[T(O)]$ with the natural left and right $T(O)$-module structure. The dependence on $\lambda$ is in the action of the loop rotation group: $s\in \C^{\times}_q$ acts on the function $t_{i,j}$ (meaning $t_iz^j$ goes to one, all other go to zero, hence $t_{i,0}$ is invertible) by $s^{\lambda_i+j}$.

Now we want the left $T(O)$ action to become the natural one with no twist. We choose another generator of $M$, namely $m_0=\prod t_{i,0}^{-\xi_i}$. Then (notational issues may remain) \[a(z)f(b(z))m_0=\big(f(a(z)b(z))\prod a_{i,0}^{-\xi_i}m_0\big) \prod a_{i,0}^{\xi_i}=f(a(z)b(z))m_0,\] the standard action of $a(z)\in T(O)$ on $f\in \C[T(O)]$.

This changes the loop rotation action by $q^{\xi(\lambda)}$. The action of right $T(O)$ is now twisted by $\C_{\xi}$. Hence after taking quotient by the left $T(O)$ we get a scyscraper sheaf on $z^{\lambda}$ twisted by $\C_{\xi}$. Its pushforward is $\C_{\xi}$. Considering the loop rotation, we get (notation again)
\[r^{\lambda}.s^{\xi}=q^{\xi(\lambda)}s^{\xi},\] as claimed.
\end{proof}
Since $r^{\lambda}$ form a basis of $\mc{A}_{T,0}$ we get an injective homomorphism from $\mc{A}_{T,0}$ to $\End(\C[\mf{t}])$. 
It remains to show that the localization map is compatible with the action on the $K$-theory of a point. Let $i\colon \Gr_T\to \Gr_G$ be the inclusion of affine Grassmannians.

\begin{lem}
\label{LemLocalizationIsCompatibleWithAction}
Consider the map $(i_*)\colon \mc{A}_{T,0}^W\to \mc{A}_{G,0}$. Then for any $a\in \mc{A}_{T,0}^W$ and any $m\in K^G(pt)\subset K^T(pt)$ we have $am=(i_*a)m$.
\end{lem}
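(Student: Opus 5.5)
We have to show that acting by $a\in\mc{A}_{T,0}^W$ on $K^T(pt)$ and then restricting to $K^G(pt)$ agrees with acting by $i_*a\in\mc{A}_{G,0}$ on $K^G(pt)$ through the convolution action defined above. The plan is to write both actions as push--pull along correspondences and compare them by base change and the projection formula.

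\textbf{Step 1: the correspondences.} The $T$-action uses
\[
\bullet/T(O)\xleftarrow{\;p_1^T\;}T(O)\backslash T(K)/T(O)\xrightarrow{\;p_2^T\;}\bullet/T(O).
\]
The $G$-action uses the analogous diagram for $G$. We insert the intermediate stack $T(O)\backslash G(K)/G(O)$, i.e. $T(O)$-equivariant sheaves on $\Gr_G$. The map $i_*$ factors as
\[
\mc{A}_{T,0}\xrightarrow{\;j_*\;}K^{T(O)\rtimes\C^\times_q}(\Gr_G)\xrightarrow{\;\text{ind}\;}K^{G(O)\rtimes\C^\times_q}(\Gr_G),
\]
where the first map is pushforward along $\Gr_T\hookrightarrow\Gr_G$. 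The second map is induction from $T(O)$ to $G(O)$; on $W$-invariants it is the inverse of the restriction isomorphism, up to the usual identification $K^{G}=K^{T,W}$. Equivalently, $i_*$ is the $W$-symmetrized pushforward.

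\textbf{Step 2: compatibility of each piece with the action.}
\begin{enumerate}
\item For the closed embedding $j$, the maps $p_2\circ j$ and $p_2^T$ both factor through $\bullet/T(O)$. Applying the projection formula, $j_*(\mc{G}\otimes j^*p_2^*\mc{F})=j_*\mc{G}\otimes p_2^*\mc{F}$, and $p_{1*}j_*=p^T_{1*}$. Hence $a.m$ computed on $\Gr_T$ equals the $T(O)$-equivariant convolution of $j_*a$ with $m$ on $\Gr_G$.
\item For the passage from $T(O)$- to $G(O)$-equivariance, I use that $m$ is pulled back from $\bullet/G(O)$. Then $p_2^*m$ is defined $G(O)$-equivariantly on the right, and the only change is in the left equivariance. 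Pushforward to $\bullet/G(O)$, followed by restriction to $\bullet/T(O)$, agrees with pushforward on $T(O)\backslash G(K)/G(O)$. This is base change for the cartesian square formed by $\bullet/T(O)\to\bullet/G(O)$ and $G(O)\backslash G(K)/G(O)\to\bullet/G(O)$; the fiber product is $T(O)\backslash G(K)/G(O)$.
\end{enumerate}

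\textbf{Step 3: conclusion.} The $W$-invariance of $a$ ensures that $i_*a$ restricts back to $j_*a$, modulo the normalization fixed in the definition of $i_*$ in~\cite{BFN}. Combining the two steps gives $am=(i_*a)m$ inside $K^T(pt)$, and this lies in $K^G(pt)$.

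\textbf{Main obstacle.} The delicate point is making base change legitimate on the ind-scheme $\Gr_G$ with the pro-group $G(O)$. I would reduce to finite-dimensional approximations: closures $\overline{\Gr^\lambda}$ with the action of the finite-dimensional quotients $G(O)/G(O)_k$, on which proper base change and the projection formula hold. A second point to check is that the normalization of $i_*$ matches exactly the induction step, rather than differing by the Euler class factor that appears in localization. If it does differ, I would instead check the identity after localization using Proposition~\ref{PropActionTZero} and the fixed-point formula on each $\overline{\Gr^\lambda}$.
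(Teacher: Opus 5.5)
Your proposal is correct and is essentially the paper's argument: both pass through a stack with mixed $T(O)$/$G(O)$ equivariance, use the projection formula for the embedding of $\Gr_T$, and use base change (restricting $G(O)$-equivariance to $T(O)$ commutes with pushforward) to compare with $\mc{A}_{G,0}$. The paper writes the intermediate stack as $G(O)\backslash G(K)/T(O)$ rather than your $T(O)\backslash G(K)/G(O)$, which is only the mirror convention for which side is pulled back; and your normalization worry is unnecessary, since $i_*$ is the plain pushforward, so for $W$-invariant $a$ one has $\pi^*(i_*a)=j_*a$ by definition (Euler classes enter only in $(i_*)^{-1}$) and no localization fallback is needed.
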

\begin{proof}
Consider the diagram
\[
\begin{tikzcd}
T(O)\backslash\bullet\ar[d,"q"] & \ar[l,"p_1^T"] T(O)\backslash T(K)/T(O)\ar[r,"p_2^T"]\ar[d,"i"] & \bullet/T(O)\ar[d]\\
G(O)\backslash\bullet\ar[d] & \ar[l,"p_1^G"] G(O)\backslash G(K)/T(O)  \ar[r,"p_2^G"]\ar[d,"\pi"] &\bullet/T(O)\ar[d,"q"]\\
G(O)\backslash\bullet & \ar[l,"p_1"] G(O)\backslash G(K)/G(O) \ar[r,"p_2"] & \bullet/G(O)
\end{tikzcd}
\]
Note that $q^*$ gives precisely the inclusion $K^G(pt)\subset K^T(pt)$.

First, we check that $i_*$ is compatible with the action on $\mc{F}\in K^{G(O)}(pt)$, namely
\[p_{2*}^T(p_1^{T*}q^*\mc{F}\otimes \mc{G})=p_{2*}^G(p_1^{G*}\mc{F}\otimes i_*\mc{G}).\] First, we have $p_2^T=p_2^Gi$, so it is enough to check
\[i_*(p_1^{T*}q^*\mc{F}\otimes \mc{G})=p_1^{G*}\mc{F}\otimes i_*\mc{G}.\] Then the top left corner commutes, hence 
\[p_1^{T*}q^*\mc{F}=i^*p_1^{G*}\mc{F}.\] The claim now follows from the projection formula.

Now we check that $\pi^*$ is compatible with the action on $\mc{F}\in K^{G(O)}(pt)$, namely
\[p_{2*}^G(p_1^{G*}\mc{F}\otimes\pi^*\mc{G})=q^*p_{2*}(p_1^*\mc{F}\otimes\mc{G}).\] We have $p_1^G=p_1\pi$, hence 
\[p_1^{G*}\mc{F}=\pi^*p_1^*\mc{F},\] and 
\[p_1^{G*}\mc{F}\otimes\pi^*\mc{G}=\pi^*(p_1^*\mc{F}\otimes \mc{G}).\] It remains to use the equation $p_{2*}\pi^*=q^*p_{2*}$. This equation means that leaving only $T(O)$ part of $G(O)$-equivariant structure and then doing pushforward is the same as doing pushforward and then leaving only $T(O)$ part of $G(O)$-equivariant structure.

The lemma follows.

\end{proof}

\end{document}